\newtheorem{theorem}{Theorem}
\newtheorem{corollary}[theorem]{Corollary}
\theoremstyle{remark}
\newtheorem{definition}[theorem]{Definition}
\newtheorem{remark}[theorem]{Remark}
\newtheorem{proposition}[theorem]{Proposition}
\newtheorem{example}[theorem]{Example}
\newtheorem*{acknowledgement}{Acknowledgment}
\numberwithin{theorem}{section}
\numberwithin{equation}{section}
\def\part{\partial}
\newcommand{\R}{{\mathord{\mathbb R}}}
\begin{document}

\title[Global Solutions to Bubble Growth in Porous Media]
{Global Solutions to Bubble Growth in Porous Media}

\author[L. Karp]{Lavi Karp}
\email{karp@braude.ac.il}
\address{Department of Mathematics,
         ORT Braude College,
         P.O. Box 78,
         21982 Karmiel,
         Israel}\thanks{*Supported by DFG Ref: SCHU 808/21-1 and by ORT
Braude College's Research Authority}

\keywords{Bubble growth, Hele-Shaw flows, generalized Newton
potential, quadratic polynomial}
\subjclass[2010]{Primary 35R37; Secondary 31B20, 35Q86}

\begin{abstract}
We study a moving boundary problem modeling an injected fluid into another
viscous fluid.  The viscous fluid is withdrawn at infinity and governed by
Darcy's law. We present solutions to the free boundary problem in terms of
time-derivative of a  generalized Newtonian potentials of the characteristic
function of the bubble. This enables us to show that
the bubble occupies the
entire space as the time tends to infinity if and only if the internal
generalized Newtonian potential of the initial bubble is a quadratic polynomial.
Howison \cite{Ho}, and DiBenedetto and Friedman \cite{BF}, studied such
behavior, but for bounded bubbles. We extend their results to unbounded
bubbles.
\end{abstract}

\maketitle

\section{Introduction}

The propose of this note is to apply the technique of generalized Newtonian
potential to the study of  bubble growth in porous media and Hele-Shaw flows.
This technique  enables the computation  a potential of measures with unbounded
support in a similar manner to the ordinary Newtonian potential . It is a
multi-valued right inverse to the Laplacain and unique up to a harmonic
polynomial of degree not exceeding two (see \cite{Ka1, KM, Ma1}). We can
therefore compute the potential of the characteristic function of arbitrary
large set $\R^n$. The Newtonian potential theory is a basic tool  in the 
studies of  Hele-Shaw flows \cite{EE, G, Ma3,  Sa2, VE}, and   since in a bubble
  growth in porous media the fluid motion is in unbounded domains, the
application of the generalized Newtonian potential is rather natural in this
type of moving boundary problems.

Consider $\R^n$  as homogeneous porous medium filled with a
viscous fluid. Another fluid is injected and forms a bubble which
occupies a domain  $D(t)$ at each time $t$. The fluid withdrawn at
infinity at a certain rate and the bubble $D(t)$ increases with the
time $t$. Let $\Omega(t)=\mathbb{R}^n\setminus D(t)$ and $\Psi(x,t)$ denotes the
pressure of the incompressible fluid in
$\Omega(t)$.  We will prove that $\Psi(x,t)$,  the solution to this moving
boundary problem (\ref{eq:formulation}) below,
is a time-derivative of a generalized Newtonian
potential of $\chi_{D(t)}$,  the characteristic function of bubble $D(t)$. In
this formulation it does  not matter whether the bubble is bounded or unbounded.

The representation of solutions by potentials has several advantages specially
in higher dimensions where the tool of conformal mappings is not available. For
example,  this enables us to construct in a simple manner solutions  for which
the  bubble  exists for all for all $t>0$ and occupies the entire space as as
$t\to\infty$.  This problem was settled  by Howison \cite{Ho}, and DiBenedetto
and  Friedman \cite{BF}, but under the condition that the initial bubble is a
bonded domain. Here we show that this type of fluids motions exists if and only
if the internal generalized Newtonian  potential of the initial bubble
$\chi_{D(0)}$ is a quadratic polynomial. It thus  extends their result to
unbounded bubbles.

The known examples of domains in $\mathbb{R}^n$ for which  internal generalized
Newtonian  potential of their characteristic set  equals to a quadratic
polynomial are  (a) ellipsoids, (b) convex domains bounded by elliptic
paraboloids,  (c) domains bounded by two parallel hyperplanes, (d) cylinders
over (a) and (b), and (e) half-spaces \cite{KM}. The complements of these
domains are null quadrature domains, that is, an open set $\Omega$ for which
\begin{equation*}
\int_{\Omega}hdx=0
  \end{equation*}
for all harmonic and integrable functions $h$ in $\Omega$.   In the two
dimensional plane  Sakai
classified those domains \cite{Sa1}, but   the classification in higher
dimensional spaces is an open problem.  Dive \cite{D}, and Nikliborc \cite{Ni},
proved that if the internal Newtonian potential of a bounded domain is a
quadratic polynomial, then it must be an ellipsoid (their proof in given in
$\mathbb{R}^3$, for proofs in arbitrary dimension see \cite{BF, Ka2}). The
classification  problem of this type of domains is also settled under the a
priori assumption that the
domain is contained in a cylinder of
co-dimension two \cite{KM}.  A recent progress in the classification problem
was obtained in \cite{KM2} and for its current state  see Remark \ref{rem}
below. 

\subsection{The  formulation of the moving boundary problem}

The bubble  is formed by injected a fluid of negligible viscosity into another
incompressible viscous fluid. We denote the set occupied by the bubble at time
$t$  by  $D(t)$ and $\Omega(t)=\mathbb{R}^n\setminus D(t)$. Suppose  $\R^n$ to
consists of homogeneous porous medium,  then the motion of the flow in
$\Omega(t)$ is subject to the Darcy's law which asserts that the velocity
$\vec{v}$ is proportional to the gradient of the pressure $\Psi$:

\begin{equation*}
\vec{v}=-\kappa\nabla_x\Psi.
\end{equation*}
Since the flow is incompressible, ${\rm div}\ \vec{v}=0$, hence
\begin{equation*}
\Delta \Psi =0 \quad \text{in} \quad \Omega(t)\quad \text{for all}
\quad t>0.
\end{equation*}
Assuming there is no surface tension, then the pressure is constant on the
boundary $\part\Omega(t)$, so we
may set
\begin{equation*}
\Psi(x,t) =0 \quad x\in D(t)\quad \text{for all} \quad t>0.
\end{equation*}
The free boundary  moves with velocity $-\nabla_x \Psi$, that
is
\begin{equation*}
\frac {\part \Psi}{\part\eta}=-\mathcal{V}_\eta  \quad  \text{on} \quad
\part D(t),
\end{equation*}
where $\eta$ is the outward normal pointing into $\Omega(t)$ and
$\mathcal{V}_\eta$ is is the outward velocity of the free boundary $\part
D(t)$. If $D(t)=\{g(x,t)<0\}$, then $\frac
{\part\Psi}{\part\eta}=\nabla_x\Psi\cdot\left(\frac {\nabla_x
g}{|\nabla_x g|}\right)$ and $\mathcal{V}_\eta=\frac {-\part_t g}{|\nabla_x
g|}$. Generalized Newtonian potential of densities in
$L^\infty(\mathbb{R}^n)$ have $|x|^2\log|x|$ growth as $x\to \infty$
\cite{Ka1,Shp1}, and 
since in our setting the bubble $D(t)$ may has infinite volume we allow
$\Psi(x,t)=o(|x|^3)$ at infinity.
We summarize these conditions:

\begin{subequations}\label{eq:formulation}
  \begin{eqnarray}
&& \qquad
        \Delta \Psi=0 \quad  \text{in} \quad\Omega(t),
        \label{f.a}\\
&& \qquad \Psi=0 \quad \text{in}\quad D(t),
        \label{f.b}\\
&& \qquad \frac{\part\Psi}{\part\eta}=-\mathcal{V}_\eta \quad
\text{on}\quad
\part \Omega(t),
        \label{f.c}
          \\
&& \qquad \Psi(x,t)=o(|x|^{3})\quad  \text{as}\quad
   |x|\to\infty,\label{f.d}
   \\
&& \qquad  \nabla\Psi(x,t)=o(|x|^{2})\quad \text{as}\quad
   |x|\to\infty.
        \label{f.e}
  \end{eqnarray}
\end{subequations}

\section{Generalized Newtonian Potential}

The   definition   and establishment of basic  properties of this potential were
carried out in \cite{KM}. Here we recall the definition and  present  an
estimate which is needed for the current application.

The Newtonian potential of a measure $\mu$ with compact support is
defined by means of the convolution
\begin{equation}
\label{eq:27}
    V(\mu)(x)=\left( J\ast\mu \right)(x)=\int J(x-y)d\mu(y)
\end{equation}
where
\[
    J(x)=\left\{\begin{array}
        {ll}-\dfrac{1}{2\pi}\log |x| ~~, \quad &n=2\\
        \dfrac{1}{(n-2)\omega_n |x|^{n-2}} ~~, \quad &n\geq 3
    \end{array}\right.
\]
and $\omega_n$ is the area of the unit sphere in $\R^n$. The potential
$V(\mu)$ satisfies the Poisson equation $\Delta V(\mu)=-\mu$ in the
distributional sense. Similarly, the {\it generalized Newtonian
potential} is a right inverse of the Laplacian, but it is  multi-valued and
acting on the space  $\mathcal{L}$, the space of all Radon measures $\mu$ in
$\R^n$ satisfying condition
\begin{equation}
\label{eq:11}
\left\|\mu\right\|_{\mathcal{L}} :=
\int \frac{d|\mu|(x)}{1+|x|^{n+1}} < \infty.
\end{equation}
Note that $\mathcal{L}$ contains all measures with densities in
$L^\infty(\R^n)$. The linear space $\mathcal{L}$ is the Banach space with the
norm
defined by (\ref{eq:11}).
For $\mu \in \mathcal{L}$ we first define $V^\alpha(\mu)$,  the operator of
the third order generalized derivatives of the potential:
\begin{equation}
 \label{eq:2}
\langle V^\alpha(\mu), \varphi \rangle := - \int \part^\alpha V(\varphi)(x)
d\mu(x),
    \quad \varphi\in \mathcal{S},\ |\alpha|=3.
\end{equation}
Here $\mathcal{S}$ is the Schwartz class of rapidly decreasing functions and
$V^\alpha(\mu)$ is a tempered distribution.

\begin{definition}
\label{Generalized Newtonian potential}
The generalized Newtonian potential $V[\mu]$ of a measure $\mu\in \mathcal{L}$
is the set of
all solutions to the system
\begin{equation}
\label{eq:28}
    \left\{
    \begin{array}
        {l}\Delta u=-\mu \\
        \part^\alpha u=V^\alpha(\mu) ~,\quad |\alpha|=3~~.
    \end{array} \right.
\end{equation}
\end{definition}
The existence of solutions to system (\ref{eq:28}) was proved in
\cite{KM} and it is unique modulo
$\mathcal{H}_2
$, the space of all harmonic polynomials of degree at most two. The
operator $V \colon \mathcal{L} \to \mathcal{S}^\prime / \mathcal{H}_2$
is continuous \cite{KM}.

We will not distinguish here between ordinary and generalized potentials. By a
potential of a set $A$ we mean the potential of its characteristic function
$\chi_A$. 

Let  $k$ be a positive integer, then
\begin{equation*}
 \|\varphi\|_k=\sup_{\{|\alpha|\leq
k\}}\sup_{\mathbb{R}^n}\left(\left(1+|x|\right)^k|\partial^k \varphi(x)|\right)
\end{equation*}
is a semi-norm on $\mathcal{S}$.  We shall need the following estimate. 

\begin{proposition}
\label{prop:5} Let $\alpha$ and $\beta$ be multi-indexes such that
$|\alpha|=3$. Then for any $\varphi\in \mathcal{S}$
\begin{equation}
\label{eq:1}
|\part^{\alpha+\beta}V(\varphi)(x)\leq C
\left(1+|x|\right)^{-(n+1+|\beta|)}\|\varphi\|_{2\left(n+1+|\beta|\right)},
\end{equation}
where the constant $C$ does not depend on $\varphi$.
\end{proposition}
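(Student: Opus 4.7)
The plan is to realize $\partial^{\alpha+\beta}V(\varphi)(x)$ as the distributional pairing of the homogeneous kernel $\partial^{\alpha+\beta}J$, which has degree $-(n+1+|\beta|)$, with the Schwartz function $\varphi(x-\cdot)$, and then isolate the singularity of this kernel at the origin with a smooth cut-off adapted to the scale $|x|$, so that the near and far pieces can be treated independently.

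Set $\gamma=\alpha+\beta$, so $|\gamma|=3+|\beta|$. Away from the origin, $\partial^\gamma J$ is smooth and homogeneous of degree $-(n+1+|\beta|)$, hence $|\partial^\gamma J(z)|\le C|z|^{-(n+1+|\beta|)}$. For $|x|\ge 1$ I introduce $\chi_x(z)=\chi(2|z|/|x|)$, where $\chi$ is a smooth cut-off with $\chi\equiv 1$ on $[0,1/2]$ and $\chi\equiv 0$ on $[1,\infty)$, so $\chi_x$ is supported in $\{|z|\le|x|/2\}$ and equals $1$ on $\{|z|\le|x|/4\}$. Changing variables $z=x-y$ I decompose
\begin{equation*}
\partial^\gamma V(\varphi)(x)=\int\partial^\gamma J(z)\bigl(1-\chi_x(z)\bigr)\varphi(x-z)\,dz+\int\partial^\gamma J(z)\chi_x(z)\varphi(x-z)\,dz=:I_1+I_2.
\end{equation*}

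On the support of $1-\chi_x$ one has $|z|\ge|x|/4$, so the pointwise bound gives $|\partial^\gamma J(z)|\le C|x|^{-(n+1+|\beta|)}$; pulling this factor out and using $\int|\varphi(x-z)|\,dz\le\|\varphi\|_{L^1}\le C\|\varphi\|_k$ for any $k>n$ produces $|I_1|\le C|x|^{-(n+1+|\beta|)}\|\varphi\|_k$. For $I_2$ the function $\chi_x\varphi(x-\cdot)$ is $C^\infty_c$, so the distributional pairing reduces to an honest integral after integrating by parts:
\begin{equation*}
I_2=(-1)^{|\gamma|}\int J(z)\,\partial^\gamma_z\!\bigl[\chi_x(z)\varphi(x-z)\bigr]\,dz.
\end{equation*}
Applying the Leibniz rule and the bounds $|\partial^\mu\chi_x|\le C|x|^{-|\mu|}$, $(1+|x-z|)^{-k}\le C|x|^{-k}$ (valid on $\{|z|\le|x|/2\}$ because then $|x-z|\ge|x|/2$), together with the elementary estimate $\int_{|z|\le R}|J(z)|\,dz\le CR^2$, yields $|I_2|\le C|x|^{2-k}\|\varphi\|_k$. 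The choice $k=2(n+1+|\beta|)$ makes $|x|^{2-k}\le C|x|^{-(n+1+|\beta|)}$ and simultaneously dominates all the derivative orders $|\gamma-\mu|\le 3+|\beta|$ that the seminorm must control. The remaining range $|x|\le 1$ is handled by a direct estimate using the local integrability of $J$ and $\sup_{|\mu|\le|\gamma|}|\partial^\mu\varphi|\le\|\varphi\|_k$.

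The main obstacle is the near-origin piece $I_2$: the kernel $\partial^\gamma J$ is not locally integrable, so pointwise estimates are useless there, and one must transfer the derivatives onto $\chi_x$ and $\varphi(x-\cdot)$. Each derivative that lands on $\chi_x$ costs a factor $|x|^{-1}$, while each derivative that lands on $\varphi$ enjoys the free weight $(1+|x-z|)^{-k}\le C|x|^{-k}$ supplied by the Schwartz decay. The factor $2$ in the seminorm index $k=2(n+1+|\beta|)$ provides exactly the margin needed to swallow the $|x|^2$ produced by $\int_{|z|\le|x|/2}|J(z)|\,dz$ and still retain decay of order $n+1+|\beta|$.
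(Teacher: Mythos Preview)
Your argument is correct. The splitting at scale $|x|$ via $\chi_x$, the pointwise kernel bound on the far piece, and the integration by parts back onto $J$ for the near piece are all sound; the choice $k=2(n+1+|\beta|)$ indeed absorbs the factor $|x|^2$ coming from $\int_{|z|\le |x|/2}|J(z)|\,dz$ with room to spare. One small quibble: in dimension $n=2$ the last integral is of order $|x|^2\log|x|$ rather than $|x|^2$, but since $2-k<-(n+1+|\beta|)$ strictly, the logarithm is harmlessly absorbed.

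As for comparison with the paper: there is nothing to compare against, because the paper does not supply a proof. It merely cites \cite[\S 1]{KM} for the case $\beta=0$ and states that ``only a slight modification is needed'' for general $\beta$, leaving this to the reader. Your write-up therefore provides exactly the self-contained argument that the paper omits. The approach you take---a Littlewood--Paley-style cut-off adapted to $|x|$, kernel homogeneity on the outer shell, and transferring derivatives back to $J$ near the origin---is the standard way to handle such estimates and is in the same spirit as the original argument in \cite{KM}.
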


For $\beta=0$ this was proved in \cite[\S 1]{KM}. Since only a slightly
modification is needed in order to extend it for (\ref{eq:1}) we leave it to
the reader.

\section{The Generalized Newtonian  Potential of the Bubble }

In this section we show that solutions to the moving boundary problem
(\ref{eq:formulation}) can be expressed in terms of the generalized potential.

\begin{proposition}
\label{prop:1}
Let  $V^\alpha(\chi_{D(t)})$ be the third order derivatives operator defined in
(\ref{eq:2})
and assume $\Psi$ satisfies (\ref{eq:formulation}), then
\begin{equation}
\label{eq:5} \frac{d}{dt}V^\alpha(\chi_{D(t)})=\part^\alpha \Psi,\quad
|\alpha|=3,
\end{equation}
where the identity (\ref{eq:5}) is in the distributional sense.
\end{proposition}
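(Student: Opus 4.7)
The plan is to unwind the distributional definition of $V^\alpha(\chi_{D(t)})$, differentiate in $t$ via the Reynolds transport theorem, use the free-boundary condition $\mathcal{V}_\eta=-\partial\Psi/\partial\eta$, and then convert the resulting surface integral on $\partial D(t)$ into the distributional action of $\partial^\alpha\Psi$ by a Green's identity on $\Omega(t)$.

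Concretely, for $\varphi\in\mathcal{S}$, the definition (\ref{eq:2}) gives
\begin{equation*}
\langle V^\alpha(\chi_{D(t)}),\varphi\rangle
 = -\int_{D(t)} \partial^\alpha V(\varphi)(x)\,dx.
\end{equation*}
Since $\partial^\alpha V(\varphi)$ is $t$-independent and $D(t)$ moves with normal velocity $\mathcal{V}_\eta$ (with $\eta$ outward from $D(t)$), the Reynolds transport theorem and (\ref{f.c}) give
\begin{equation*}
\frac{d}{dt}\langle V^\alpha(\chi_{D(t)}),\varphi\rangle
 = -\int_{\partial D(t)} \partial^\alpha V(\varphi)\, \mathcal{V}_\eta\, dS
 = \int_{\partial D(t)} \partial^\alpha V(\varphi)\, \frac{\partial\Psi}{\partial\eta}\, dS.
\end{equation*}

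Next I would apply Green's second identity on the truncated domain $\Omega(t)\cap B_R$ with $f:=\partial^\alpha V(\varphi)=V(\partial^\alpha\varphi)$. Since $\Delta f=-\partial^\alpha\varphi$, $\Delta\Psi=0$ in $\Omega(t)$ by (\ref{f.a}), and $\Psi=0$ on $\partial\Omega(t)$ by (\ref{f.b}), the interior contribution collapses to $-\int_{\Omega(t)\cap B_R}\Psi\,\partial^\alpha\varphi\,dx$, while the boundary on $\partial D(t)\cap B_R$ contributes exactly $\int_{\partial D(t)\cap B_R} f\,\tfrac{\partial\Psi}{\partial\eta}\,dS$ after accounting for the orientation of the outward normal to $\Omega(t)$ being $-\eta$. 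Passing $R\to\infty$ and recalling that $\langle\partial^\alpha\Psi,\varphi\rangle=-\int\Psi\,\partial^\alpha\varphi\,dx$ (since $|\alpha|=3$ and $\Psi\equiv 0$ on $D(t)$) would yield
\begin{equation*}
\langle\partial^\alpha\Psi,\varphi\rangle
 = \int_{\partial D(t)} \partial^\alpha V(\varphi)\, \frac{\partial\Psi}{\partial\eta}\, dS,
\end{equation*}
which, combined with the previous display, is the claimed identity.

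The main obstacle is the control of the spherical boundary term $\int_{\Omega(t)\cap\partial B_R}(\Psi\partial_\nu f - f\partial_\nu\Psi)\,dS$ as $R\to\infty$. Here Proposition \ref{prop:5} is essential: with $|\alpha|=3$, it gives $|f(x)|\lesssim(1+|x|)^{-(n+1)}$ and $|\nabla f(x)|\lesssim(1+|x|)^{-(n+2)}$. Combined with the growth assumptions (\ref{f.d})--(\ref{f.e}) stating $\Psi=o(|x|^3)$ and $\nabla\Psi=o(|x|^2)$, each term is bounded on $\partial B_R$ by $R^{n-1}\cdot o(R^3)\cdot R^{-(n+2)}=o(1)$ and $R^{n-1}\cdot R^{-(n+1)}\cdot o(R^2)=o(1)$ respectively, so both vanish in the limit. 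A secondary technical point is justifying the Reynolds formula and Green's identity, for which one assumes the standard regularity of $\partial D(t)$ (smooth and moving smoothly in $t$) that is tacit in the moving-boundary formulation (\ref{eq:formulation}).
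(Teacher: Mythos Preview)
Your proof is correct and follows essentially the same route as the paper's: differentiate the defining integral in $t$ via a transport argument to get a surface integral on $\partial D(t)$, replace $\mathcal{V}_\eta$ by $-\partial\Psi/\partial\eta$, and convert the surface integral to the volume pairing $-\int_{\Omega(t)}\Psi\,\partial^\alpha\varphi\,dx$ by Green's identity on $\Omega(t)\cap B_R$, with Proposition~\ref{prop:5} and the growth conditions (\ref{f.d})--(\ref{f.e}) killing the spherical boundary terms. The only cosmetic difference is that the paper first uses $V^\alpha(\chi_{\mathbb{R}^n})=0$ to rewrite the $D(t)$-integral as one over $\Omega(t)$ before differentiating, whereas you differentiate on $D(t)$ directly; you are also more explicit than the paper in checking the $o(1)$ decay of the $\partial B_R$ contributions.
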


\begin{proof}
Let $\varphi\in \mathcal{S}$,  then Proposition \ref{prop:5} implies that $\part
V^\alpha(\varphi)\in L^1(\R^n)$ and  therefore
\begin{math}
 V^\alpha(\chi_{\mathbb{R}^n})(\varphi)=-\int_{\mathbb{R}^n}\part^\alpha
V(\varphi)dx=0.
\end{math}
Since $\Omega(t)=\mathbb{R}^n\setminus D(t)$,
\begin{equation}
\label{eq:6} \langle
V^\alpha(\chi_{D(t)}),\varphi\rangle=\int_{D(t)}\part^\alpha
V(\varphi)(x)dx=-\int_{\Omega(t)}\part^\alpha V(\varphi)(x)dx
\end{equation}
and
\begin{equation}
\label{eq:7} \frac{d}{dt}\left(\langle
V^\alpha(\chi_{D(t)}),\varphi\rangle\right)=-\int_{\part\Omega(t)}\part^\alpha
V(\varphi)(x)\mathcal{V}_\eta dS.
\end{equation}
Let $B_R$ be a ball of radius $R$ and apply the Green's formula, then
\begin{equation}
\label{eq:8}
\begin{split}& \int_{\Omega(t)\cap B_R}\Delta
\left(\part^\alpha V(\varphi)\right)\Psi
dx \\ =-&\int_{\part\Omega(t)}\left(\frac{\part \left(\part^\alpha
V(\varphi)\right)}{\part \eta}\Psi-\part^\alpha
V(\varphi)\frac{\part \Psi}{\part \eta}\right)dS\\
+&\int_{\Omega(t)\cap \part B_R}\left(\frac{\part
\left(\part^\alpha V(\varphi)\right)}{\part \eta}\Psi-\part^\alpha
V(\varphi)\frac{\part \Psi}{\part \eta}\right)dS.
\end{split}
\end{equation}
By the estimate (\ref{eq:1}) and the growth assumptions (\ref{f.d}) and
(\ref{f.e}) of $\Psi$,
the second term of the right hand site of (\ref{eq:8}) tends to
zero as $R$ goes to infinity. Thus, letting $R\to \infty$ and using the
boundary conditions of (\ref{eq:formulation}) we get
\begin{equation}
\label{eq:21}
 \int_{\Omega(t)}\Delta
\left(\part^\alpha V(\varphi)\right)\Psi
dx =\int_{\part\Omega(t)}\part^\alpha V(\varphi)\mathcal{V}_\eta dS.
\end{equation}
Since  $\Delta V(\varphi)= -\varphi$  for any test function, we see from  
(\ref{eq:7}) and (\ref{eq:21}) that
\begin{equation}
\label{eq:9} \frac{d}{dt}\left(\langle
V^\alpha(\chi_{D(t)}),\varphi\rangle\right)=-\int_{\Omega(t)}
\Delta\left(\part^\alpha
V(\varphi)\right)\Psi dx=-\int\part^\alpha \varphi\Psi dx
\end{equation}
which is the distributional meaning of (\ref{eq:7}).
\end{proof}

\begin{corollary}
\label{cor:1}
If the bubble $D(t)$ occupies the entire space as $t$ tends to infinity,
that is,
\begin{equation}
\label{eq:12} \lim_{t\to\infty}D(t)=\R^n,
\end{equation}
then the potential of the initial bubble $D(0)$ coincides with a
quadratic polynomial on  $D(0)$.
\end{corollary}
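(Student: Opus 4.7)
My plan is to integrate the identity of Proposition \ref{prop:1} in time over $[0,\infty)$, use the hypothesis $D(t)\to\R^n$ to kill the terminal term, and read off the third order derivatives of the desired potential in terms of a function that vanishes on $D(0)$.

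First I would record two preliminary facts. The bubble is nested, $D(0)\subset D(t)$ for every $t>0$, because the free boundary moves with outward normal velocity $\mathcal{V}_\eta=-\part\Psi/\part\eta\ge 0$ (the pressure $\Psi$ is non-negative and vanishes on $\part D(t)$, so its outward normal derivative is non-positive). Consequently, for every $x\in D(0)$ and every $t\ge 0$ we have $\Psi(x,t)=0$ by (\ref{f.b}). The second fact is that the hypothesis $\lim_{t\to\infty}D(t)=\R^n$ means $\chi_{D(t)}\to 1$ pointwise a.e., hence, by dominated convergence in the norm (\ref{eq:11}), $\chi_{D(t)}\to\chi_{\R^n}$ in $\mathcal{L}$. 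The continuity of $V^\alpha:\mathcal{L}\to\mathcal{S}'$ (together with the identity $V^\alpha(\chi_{\R^n})=0$ established at the start of the proof of Proposition \ref{prop:1}) then yields
\begin{equation*}
\lim_{t\to\infty}V^\alpha(\chi_{D(t)})=0\quad\text{in }\mathcal{S}',\qquad|\alpha|=3.
\end{equation*}

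Next I would integrate (\ref{eq:5}) in the distributional sense. Pair both sides with $\varphi\in\mathcal{S}$ and integrate over $s\in[0,T]$; by Fubini this gives
\begin{equation*}
\langle V^\alpha(\chi_{D(T)})-V^\alpha(\chi_{D(0)}),\varphi\rangle
=-\int_0^T\!\!\int_{\R^n}\part^\alpha\varphi(x)\,\Psi(x,s)\,dx\,ds.
\end{equation*}
Letting $T\to\infty$ and using the preceding limit, I define the tempered distribution
\begin{equation*}
\langle W,\psi\rangle:=\int_0^\infty\!\!\int_{\R^n}\psi(x)\Psi(x,s)\,dx\,ds,
\end{equation*}
and obtain $\part^\alpha W=V^\alpha(\chi_{D(0)})$ for every $|\alpha|=3$. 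Choosing any representative $u$ of the generalized Newtonian potential $V[\chi_{D(0)}]$, Definition \ref{Generalized Newtonian potential} gives $\part^\alpha u=V^\alpha(\chi_{D(0)})=\part^\alpha W$, so $u-W$ is a polynomial $P$ of degree at most two. Since $\Psi(\cdot,s)\equiv 0$ on $D(0)$ for every $s\ge 0$, the distribution $W$ is represented by the zero function on $D(0)$, and therefore $u=P$ on $D(0)$, which is the claim.

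The main technical obstacle is the passage to the limit $T\to\infty$: the convergence $V^\alpha(\chi_{D(T)})\to 0$ in $\mathcal{S}'$ has to be genuine convergence of distributions, which requires the $\mathcal{L}$-continuity of $V^\alpha$ invoked above and a careful use of the weight $(1+|x|^{n+1})^{-1}$ that absorbs the unboundedness of $D(T)$. A secondary point worth checking is that the iterated integral defining $W$ is absolutely convergent against any Schwartz function; this uses $\Psi\ge 0$, the growth restriction (\ref{f.d}) together with the decay (\ref{eq:1}) of $\part^\alpha V(\varphi)$, and Fubini's theorem. Once these two ingredients are in place the argument is algebraic, and it also shows that the polynomial $P$ is genuinely quadratic, since $\Delta u=-\chi_{D(0)}=-1$ on $D(0)$ forces $\Delta P=-1$.
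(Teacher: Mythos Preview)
Your argument is correct and follows essentially the same route as the paper: integrate Proposition~\ref{prop:1} in time, then use the continuity of $V^\alpha$ together with $V^\alpha(\chi_{\R^n})=0$ to eliminate the terminal term. The paper streamlines the last step by restricting from the outset to test functions $\varphi\in C_0^\infty(D(0))$; since $\Psi(\cdot,s)\equiv 0$ on $D(0)$, the right-hand side of your integrated identity already vanishes at every finite $T$, so one gets $\langle V^\alpha(\chi_{D(0)}),\varphi\rangle=\langle V^\alpha(\chi_{D(t)}),\varphi\rangle\to 0$ directly, without ever introducing the global distribution $W$ or the attendant convergence question you flag. Your detour through $W$ is harmless (and carries a small sign slip: one actually gets $\partial^\alpha W=-V^\alpha(\chi_{D(0)})$), but the paper's localization avoids it entirely.
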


\begin{proof}
Taking the integral of  (\ref{eq:5}) we get 
\begin{equation}
\label{eq:10}
\langle V^\alpha(\chi_{D(t)}),\varphi\rangle-\langle
V^\alpha(\chi_{D(0)}),\varphi\rangle\\
 =-\int_0^t\int\part^\alpha \varphi(x)\Psi(x,s) dxds.
\end{equation}
Since
$\Psi(x,s)=0$ for all $x\in D(0)$ and all $s\geq 0$, we get from (\ref{eq:10})
that
\begin{equation}
 \langle V^\alpha(\chi_{D(0)}),\varphi\rangle=\langle
V^\alpha(\chi_{D(t)}),\varphi\rangle \quad \text{for all}\ \varphi\in
C_0^\infty(D(0)).
\end{equation}
Now if (\ref{eq:12}) holds, then by the continuity of the generalized
potentials we have that
\begin{equation*}
  \langle V^\alpha(\chi_{D(0)}),\varphi\rangle=\lim_{t\to \infty}\langle
V^\alpha(\chi_{D(t)}),\varphi\rangle=\langle
V^\alpha(\chi_{\mathbb{R}^n}),\varphi\rangle=0
\end{equation*}
for all $\varphi\in C_0^\infty(D(0))$.
Hence  the third order derivatives for any $u\in V[\chi_{D(0)}]$
vanish in $D(0)$. 
\end{proof}

The following formula is the analogous of Richardson's theorem \cite{Ri}, but
for suction at infinity. This was previously proved by Entov and Etingof
\cite{EE} (see also  \cite[\S 4.6]{VE}), in the plane and for a bounded bubble.
We thus
generalized
their result to the space $\R^n$ and for unbounded bubbles.

\begin{theorem}
\label{thm:1}
Suppose the solution of (\ref{eq:formulation})
exists for $t\in [0,T])$, then there exists a generalized Newtonian potential 
$u(\cdot, t)\in
V[\chi_{D(t)}]$ such that
\begin{equation}
\label{eq:13} \frac{d }{dt}u(x,t)=a(t)+\Psi(x,t).
\end{equation}
\end{theorem}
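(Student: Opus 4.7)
The plan is to produce $u(\cdot,t)$ by time integration of $\Psi$ against a fixed initial representative. Concretely, I would fix any $u_0 \in V[\chi_{D(0)}]$ and set
\begin{equation*}
u(x,t) := u_0(x) + \int_0^t \Psi(x,s)\,ds
\end{equation*}
(one can add any $\int_0^t a(s)\,ds$ to accommodate the free function $a(t)$ in the conclusion). The equality $\frac{d}{dt} u = \Psi$ is then immediate, so the real content is to verify that $u(\cdot,t)\in V[\chi_{D(t)}]$, i.e., that both conditions of Definition \ref{Generalized Newtonian potential} hold.

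For the third-derivative condition, I would differentiate under the integral and invoke Proposition \ref{prop:1} directly: by the fundamental theorem of calculus in $\mathcal{S}'$,
\begin{equation*}
\part^\alpha u(\cdot, t) = V^\alpha(\chi_{D(0)}) + \int_0^t \part^\alpha\Psi(\cdot,s)\,ds = V^\alpha(\chi_{D(0)}) + \int_0^t \frac{d}{ds}V^\alpha(\chi_{D(s)})\,ds = V^\alpha(\chi_{D(t)}).
\end{equation*}
This is essentially bookkeeping.

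The Poisson equation $\Delta u(\cdot,t) = -\chi_{D(t)}$ is the more substantive point, and it reduces to proving the distributional identity
\begin{equation*}
\Delta \Psi(\cdot,t) = -\frac{d}{dt}\chi_{D(t)} \quad \text{in}\ \mathcal{D}'(\R^n),
\end{equation*}
a Laplacian-level analogue of Proposition \ref{prop:1}. I would establish it by pairing with $\varphi\in C_0^\infty(\R^n)$: since $\Psi\equiv 0$ on $D(t)$, Green's formula on $\Omega(t)$ together with conditions (\ref{f.a})--(\ref{f.c}) reduces $\int \Psi\Delta\varphi\,dx$ to $-\int_{\part D(t)}\varphi\,\mathcal{V}_\eta\,dS$, which by the Reynolds transport theorem equals $-\frac{d}{dt}\int_{D(t)}\varphi\,dx$. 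Because $\varphi$ is compactly supported, no decay estimate at infinity is required. Integrating in $s\in[0,t]$ yields $\Delta\int_0^t\Psi(\cdot,s)\,ds = \chi_{D(0)} - \chi_{D(t)}$, and combining with $\Delta u_0 = -\chi_{D(0)}$ gives $\Delta u(\cdot,t) = -\chi_{D(t)}$ as required.

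The main obstacle is the distributional Laplacian identity above: the Green's-formula argument has to be redone at the level of $\Delta$ itself, with careful attention to the orientation of $\eta$ (outward from $D(t)$, inward to $\Omega(t)$) in order to correctly recover the sign $-\mathcal{V}_\eta$ from condition (\ref{f.c}). The free function $a(t)$ in the conclusion reflects the freedom to add any purely time-dependent constant to $u$: since constants lie in $\mathcal{H}_2$, doing so leaves $\Delta u$ and $\part^\alpha u$ unchanged and therefore preserves membership in $V[\chi_{D(t)}]$.
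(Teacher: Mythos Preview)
Your argument is correct, but it is organized differently from the paper's. You construct $u(\cdot,t)$ explicitly as $u_0+\int_0^t\Psi(\cdot,s)\,ds$ and then verify both defining conditions of $V[\chi_{D(t)}]$; the extra ingredient you need beyond Proposition~\ref{prop:1} is the distributional identity $\Delta\Psi=-\frac{d}{dt}\chi_{D(t)}$, which you obtain by a Green's-formula computation analogous to (but simpler than) that in Proposition~\ref{prop:1}, since compact support of $\varphi$ eliminates the boundary term at infinity. The paper instead starts from an arbitrary choice $w(\cdot,t)\in V[\chi_{D(t)}]$, uses only the third-derivative identity of Proposition~\ref{prop:1} to deduce $w(x,t)=F(x)+\int_0^t\Psi(x,s)\,ds+q(x,t)$ with $q$ a quadratic in $x$, and then must argue that the $|x|^2$-coefficient $M(t)$ in $q$ is actually constant in $t$ (by computing $\Delta w$ inside $D(0)$), after which subtracting the residual harmonic part yields $u$. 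Your route is more direct and avoids the $M(t)$ analysis entirely, at the price of proving the Laplacian-level transport identity; the paper's route stays within the third-derivative framework already set up but pays for it with the extra polynomial bookkeeping. Both yield $a(t)\equiv 0$ as an admissible choice, with the general $a(t)$ reflecting the $\mathcal{H}_2$-ambiguity you note.
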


\begin{proof}
Let $w\in V[\chi_{D(t)}]$ and $|\alpha|=3$, then by Proposition \ref{prop:1}
\begin{equation}
\frac{d}{dt}\part^\alpha w(x,t)=\part^\alpha\Psi(x,t),
\end{equation}
which implies
\begin{equation}
\part^\alpha w(x,t)=F_\alpha(x)+\int_0^t\part^\alpha\Psi(x,s)ds.
\end{equation}
Since $\part_i F_\alpha=\part_j F_\beta$ for any $\alpha,\beta$
with $\part_i \part^\alpha=\part_j \part^\beta$, there is a
function $F$ such that $\part^\alpha F=F_\alpha$. Hence
\begin{equation}
 w(x,t)=F(x)+\int_0^t\Psi(x,s)ds + q(x,t),
\end{equation}
where $q(x, t)$ is a quadratic polynomial. We can write it in the form
\begin{equation*}
q(x,t)=|x|^2M(t)+h(x,t)+A(t),
\end{equation*}
where  $h(x ,t)$ is harmonic polynomial of degree not exceeding  two (see
e.g. \cite{ABR}). Now $\Psi(x,t)=0$ for $x\in D(0)$ and $t\in[0,T]$, therefore
in
$D(0)$ we have
\begin{equation}
-1=\Delta w(x,t)=\Delta F(x)+ 2n M(t).
\end{equation}
Differentiating  with respect to $t$ gives
\begin{equation}
0=\frac{d}{dt}\Delta w(x,t)= 2n \frac{d}{dt}M(t).
\end{equation}
So $M(t)$ does not depend on $t$. Letting $u(x,t)=w(x,t)-h(x,t)$,
then $u$ belongs to $ V[\chi_{D(t)}]$ and satisfies (\ref{eq:13})
 with $a(t)=\frac{d}{dt}A(t)$.
\end{proof}

We shall now see that the converse statement to Theorem \ref{thm:1} is also
true.

\begin{theorem}
\label{thm:2}
 Let $D(t)$ be a continuous family of domains such that $D(t_1)\subset D(t_2)$
for
$t_1<t_2$ and $\part D(t)$ is sufficiently smooth so that the Green's identity
holds. If for $t\in [0,T]$ there are  potentials $u(\cdot,t)\in V[\chi_{D(t)}]$
such that
\begin{equation}
\label{eq:17}
u(x,t_2)- u(x,t_1)\quad \text{does not depend on }
 x   \text{ for }  x\in D(t_1),
\end{equation}
then the solution to (\ref{eq:formulation}) is given by
\begin{equation}
\label{eq:18} \Psi(x,t):=\frac{d}{dt}u(x,t)-a(t).
\end{equation}
\end{theorem}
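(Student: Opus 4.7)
The plan is to verify that $\Psi=\frac{d}{dt}u-a(t)$ satisfies each of (\ref{f.a})--(\ref{f.e}), with $a(t)$ chosen precisely so that (\ref{f.b}) holds. Starting with (\ref{f.b}): for any $t_1<t$, I would apply (\ref{eq:17}) with $t_2=t$ to obtain that $u(x,t)-u(x,t_1)$ is independent of $x$ on $D(t_1)$; differentiating this identity in $t_2$ at $t_2=t$ shows that $\frac{d}{dt}u(x,t)$ is constant in $x$ on $D(t_1)$. Letting $t_1\uparrow t$ and using the continuity of the family extends this constancy to $D(t)$, and I would define $a(t)$ to be the common value, giving $\Psi=0$ in $D(t)$. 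For (\ref{f.a}), fix $x\in\Omega(t)$. Monotonicity gives $x\in\Omega(s)$ for $s\le t$, and continuity of the family gives the same for $s$ slightly larger than $t$; since $\Delta u(\cdot,s)=-\chi_{D(s)}$ vanishes in a neighborhood of $x$ for all such $s$, exchanging $\Delta_x$ with $\part_s$ (legitimate because $u$ is harmonic and hence smooth near $x$) produces $\Delta\Psi(x,t)=0$.

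The crucial step is (\ref{f.c}). The guiding idea is to differentiate the distributional identity $\Delta u(\cdot,t)=-\chi_{D(t)}$ in time. By Reynolds's transport formula, for any $\varphi\in C_c^\infty(\R^n)$,
\[
\frac{d}{dt}\langle\chi_{D(t)},\varphi\rangle = \int_{\part D(t)}\varphi\,\mathcal{V}_\eta\,dS,
\]
so $\frac{d}{dt}\chi_{D(t)}=\mathcal{V}_\eta\,dS|_{\part D(t)}$ distributionally, and therefore $\Delta\Psi=-\mathcal{V}_\eta\,dS|_{\part D(t)}$. On the other hand, (\ref{f.a}) together with $\Psi\equiv 0$ on $D(t)$ and the continuity of $\Psi$ across $\part D(t)$ (which stems from the single-layer structure of $\part_t u$) tells me $\Psi$ is classically harmonic on each side of $\part D(t)$ and $C^0$ through it, so its distributional Laplacian is concentrated on $\part D(t)$ with density equal to the jump $\part_\eta\Psi|_{\Omega(t)}-\part_\eta\Psi|_{D(t)}=\part_\eta\Psi|_{\Omega(t)}$. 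Matching the two expressions for $\Delta\Psi$ yields $\part_\eta\Psi=-\mathcal{V}_\eta$ on $\part D(t)$, which is (\ref{f.c}). The smoothness hypothesis on $\part D(t)$ enters here twice: to justify Reynolds's formula and to identify the jump via Green's identity.

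Finally, the growth conditions (\ref{f.d}) and (\ref{f.e}) should follow from the $|x|^2\log|x|$-type growth of generalized Newtonian potentials noted in \S1: since $u(\cdot,t_2)-u(\cdot,t_1)$ is, modulo a harmonic polynomial of degree $\le 2$, the Newtonian potential of $\chi_{D(t_2)}-\chi_{D(t_1)}$, the time derivative $\part_t u$ is at worst a quadratic polynomial in $x$ plus a term of lower growth, and subtracting $a(t)$ removes the constant so that $\Psi=o(|x|^3)$ and $\nabla\Psi=o(|x|^2)$. I expect the main obstacle to be the distribution-to-pointwise passage in (\ref{f.c}): turning the identity $\Delta\Psi=-\mathcal{V}_\eta\,dS|_{\part D(t)}$ into the pointwise Neumann condition on the moving boundary requires careful bookkeeping of one-sided limits and of the orientation of $\eta$, and depends essentially on the hypothesis that $\part D(t)$ is smooth and that the family varies continuously in $t$.
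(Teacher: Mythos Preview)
Your proposal is correct and follows essentially the same line as the paper: both differentiate $\Delta u(\cdot,t)=-\chi_{D(t)}$ in $t$ to obtain the distributional identity $\langle\partial_t u,\Delta\varphi\rangle=-\langle\mathcal{V}_\eta,\varphi\rangle$, then verify (\ref{f.a}), (\ref{f.b}), (\ref{f.d}), (\ref{f.e}) directly and extract (\ref{f.c}) from that identity via Green's formula (your ``jump formula'' phrasing is equivalent to the paper's integration by parts over $\Omega(t)$). The only cosmetic difference is that the paper defines $a(t)=A'(t)$ with $A(t):=u(x,t)-u(x,0)$ for $x\in D(0)$, rather than via your limiting argument $t_1\uparrow t$.
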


\begin{proof}
Here we argue similar to \cite{BF}. Let $\varphi\in C_0^\infty(\mathbb{R}^n)$
and $t_1<t_2$. Then
\begin{equation*}
 \lim_{t_2\to t_1}\dfrac{1}{t_2-t_1}\int_{D(t_2)\setminus D(t_1)} \varphi dx=
\int_{\part D(t_1)}\mathcal{V}_\eta dS,
\end{equation*}
where $\mathcal{V}_\eta$ is is the outward velocity of the free boundary $\part
D(t_1)$.  Since
\begin{equation*}
 \dfrac{1}{t_2-t_1}\int_{D(t_2)\setminus D(t_1)} \varphi
dx=\dfrac{-1}{t_2-t_1}\int\left(u(x,t_2)-u(x,t_1)\right) \Delta\varphi dx,
\end{equation*}
 $\frac{d}{dt}u(x,t)$
exists and
\begin{equation}
\label{eq:19}
 \langle \frac{d}{dt}u(\cdot,t),\Delta
\varphi\rangle=-\langle\mathcal{V}_n,\varphi\rangle.
\end{equation}
By condition (\ref{eq:17}), $u(x,t)-u(x,0)$ does not depend on $x$  for $x\in
D(0)$.  So we denote $u(x,t)-u(x,0)$ by $A(t)$ and define
\begin{equation*}
 \Psi(x,t):=\frac{d}{dt}u(x,t)-A'(t).
\end{equation*}
 We shall now verify that this $\Psi(x,t)$
satisfies (\ref{eq:formulation}). Since $u(x,t)\in V[\chi_{D(t)}]$, it is
harmonic in $\Omega(t)$ and therefore (\ref{f.a}) holds. Condition
(\ref{eq:17}) implies that $\Psi(x,t)=0$ for $x\in D(t)$, so (\ref{f.b}) holds
too and the growth properties (\ref{f.d}) and (\ref{f.e}) are consequence of
the known  estimates of the generalized potential of
$L^\infty(\mathbb{R}^n)$-densities \cite{Ka1,Shp1}.  In order to verify
(\ref{f.c}) we use (\ref{f.b}), (\ref{eq:19}) and the Green's formula, these
yield
\begin{equation}
\begin{split}
-\langle\mathcal{V}_\eta,\varphi\rangle &=
 \langle \frac{d}{dt}u(\cdot,t),\Delta\varphi\rangle
=\int_{\Omega(t)}\frac{d}{dt}u(x,t)\Delta\varphi(x) dx\\
& =
\int_{\part D(t)}\left(\frac{\part}{\part\eta}\left(\frac{d}{dt}u(x,
t)\right)\varphi(x)\right)dS.
\end{split}
\end{equation}
This completes the
proof. 
\end{proof}

\section{Construction of global solutions}

In this section we use Theorem \ref{thm:2} in order to construct solutions to
the moving boundary problem (\ref{eq:formulation}) such that the bubble $D(t)$
will occupy the entire space as $t$ goes to infinity.
By Corollary \ref{cor:1} a necessary condition for  that is
\begin{equation*}
 u(x)= q(x) \quad \text{for}\  x \in D(0), \quad \ u\in V[\chi_{D(0)}],
\end{equation*}
 where $q$ is a quadratic polynomial. Here we shall
prove that this is also a sufficient condition.

We first recall few known facts. A classical theorem of Newton says that the
gravitational force of a homogeneous
shell between two similar ellipsoids vanishes in the cavity of the shell (see
e.g. \cite {Ch, Ke}). This is equivalent to the property that the ordinary
internal Newtonian
potential of ellipsoids is a quadratic polynomial (see e.g. \cite{BF}). We also
know that 
the following  unbounded domains have their internal  generalized potential 
coincides with a quadratic polynomial: ({\it i}) strips, ({\it ii}) half-spaces,
 ({\it iii}) convex domains
bounded by elliptical paraboloid, and ({\it iv}) cylinders
over ellipsoids and over these domains
\cite{KM, Shg}.

The first example shows that there are many motions of bubbles which fill the
entire space as $t$ runs to infinity.

\begin{example}
\label{ex:1}
 Let $E(t)$ be any  continuous family of ellipsoids such that $E(t_1)\subset
E(t_2)$ whenever $t_1<t_2$ and $\lim_{t\to\infty}E(t)=\R^n$. Then the Newtonian
potential
$V(\chi_{E(t)})(x)=q(x,t)$ for $x\in
E(t)$, where $q$ is a quadratic polynomial satisfying $\Delta q=-1$. So we may
write
\begin{equation}
\label{eq:15}
 q(x,t)=\sum_{i=1}^n a_i(t)x_i^2 +p_2(x,t), \quad
a_1(t)+\cdots+a_n(t)=-\dfrac 1 2,
\end{equation}
where $p_2$ is a harmonic polynomial of degree not exceeding two.
Then
\begin{equation*}
 h(x,t):=\sum_{i=1}^{n-1}a_i(t)x_i^2+\left(a_n(t)+\dfrac 1 2\right)x_n^2
\end{equation*}
is harmonic polynomial of degree two and therefore
\[u(x,t)=V(\chi_{E(t)})(x)-h(x,t)-p_2(x,t)\]
is a generalized Newton potential of
$\chi_{E(t)}$ satisfying $u(x,t)=-\frac 1 2 x_n^2$
on $E(t)$.  Therefore, condition (\ref{eq:17})
holds so we conclude that $\Psi(x,t)=\frac{d}{dt}u(x,t)$ is a global solution to
the
moving boundary problem (\ref{eq:formulation}).
\end{example}

Of course these solutions may  not  have a meaningful physical interpretation,
since they have  a quadratic growth at infinity. It is therefore reasonable
to look for solutions which have  a minimal  growth at infinity.
We see from (\ref{eq:15}) that a necessary condition for that is that the
coefficients $a_i(t)$ are independent of $t$. Such behavior of the coefficients
$a_i(t)$ occurs 
when $E(t)$ are concentric ellipsoids having there center at the origin (see
e.g. \cite{Ni}). 

Thus the ellipsoids of the form
\begin{equation*}
 E(t)=\left\{x\in\mathbb{R}^n: \left(\frac{x_1}{\alpha_1}\right)^2+\cdots+
\left(\frac{x_n}{\alpha_n}\right)^2<(1+t)^2\right\}
\end{equation*}
will form the  bubble for which $\Psi(x,t)$ will have a minimal growth at
infinity. This example where also obtained by Friedman and DiBenedetto
\cite{BF}.  In addition, any cylinder
\begin{equation*}
 E_k(t)=\left\{x\in\mathbb{R}^k: \left(\frac{x_1}{\alpha_1}\right)^2+\cdots+
\left(\frac{x_k}{\alpha_k}\right)^2<(1+t)^2\right\}\times \mathbb{R}^{n-k}
\end{equation*}
will also gives global solutions which has minimal a growth in directions off
the cylinder.

We now turn  showing that elliptical paraboloid also form a global
solution.

\begin{example}
\label{ex:2}
 Let
\begin{equation}
\label{eq:23}
F=\left\{x:\ x_n>\left(\frac{x_1}{\alpha_1}\right)^2+\cdots
+\left(\frac{x_{n-1}}{\alpha_{n-1}}\right)^2\right\},
\end{equation}
then any  generalized Newton potential of $\chi_F$ coincides with
a quadratic polynomial on $F$. We could therefore argue like as in Example
\ref{ex:1}, but
that will not have a minimal
growth. We claim that there is $u\in V[\chi_F]$ such
that $u(x)$ does not depend on $x_n$ for $x\in F$. Assuming it for the moment,
 and define the bubble $F(t)$ by a translation in the $x_n$ axis, that is,
\begin{equation*}
 F(t)=\{x\in \mathbb{R}^n: (x_1,...,x_{n-1},x_n+t)\in D\}.
\end{equation*}
 Then obviously
$u(x,t):=u(x_1,...,x_{n-1},x_n+t)$ is a generalized potential of $\chi_{F(t)}$
and since $u(x)$ does not depend on $x_n$, $u(x,t)=u(x) $ for $x\in F(t)$.
Thus $u(x,t)$
satisfies condition (\ref{eq:17}) and by Theorem \ref{thm:2}
$\frac{d}{dt}u(x,t)$
is a global  solution to (\ref{eq:formulation}). In addition,  $\lim_{t\to
\infty}F(t)=\mathbb{R}^n$.

In order to prove the claim, we adopt the following argument from
\cite[\S 4]{KM2}.  There we showed that if  $V[\chi_D]$ coincides with a
quadratic polynomial in $D$ and
\begin{equation}
\label{eq:25}
 \lim_{\rho\to\infty}\dfrac{{\rm Vol}(D\cap B_\rho)}{{\rm Vol}(B_\rho)}=0,
\end{equation}
then there is a potential $w\in V[\chi_D]$,
such that
\begin{equation*}
 w(x)=q(x):=\sum_{ij=1}^n a_{ij}x_ix_j +\sum_{i=1}^n b_ix_i +c, \quad
\text{for}\
x\in D
\end{equation*}
 and
\begin{equation}
\label{eq:18}
 \lim_{\rho\to\infty}\dfrac{w(\rho x)-q(\rho x)}{\rho^2}=\sum_{ij=1}^n
a_{ij}x_ix_j.
\end{equation}
Here ${\rm Vol}(\cdot)$ denotes the Lebesgue measure in $\R^n$ and $B_\rho$ a
ball with radius $\rho$.

Now the set $F$ given by (\ref{eq:23}) certainly satisfies condition
(\ref{eq:25}). So let $w\in V[\chi_F]$ be the potential which satisfies
(\ref{eq:18}).  
Since $w(x)-q(x)$ vanishes on $F$, $w(\rho x)-q(\rho x)$ vanishes on the
positive $x_n$ axis for every $\rho$ and therefore the limit in (\ref{eq:18})
also vanishes there. Hence $a_n=0$. Letting   $u(x)=w(x)-\sum_{i=1}^n
(a_{in}+a_ni)x_ix_n -b_nx_n$ gives the required potential which also provides a
solution with  minimal growth at the direction of the negative $x_n$ axis.

\end{example}

\begin{remark}
\label{rem}
The complete characterization of domains $D$ with internal potential equal to a
quadratic polynomial is not known. We recently proved in \cite{KM2} that if $D$
is such domain, then it must be one of the following type:
\begin{enumerate}
 \item[1.] If $\limsup_{\rho\to \infty}\dfrac{{\rm Vol}\left(D\cap
B_\rho\right)}{{\rm Vol}(B_\rho)}>0$, then $D$ is  half-space;
\item[2.]  If $\lim_{\rho\to \infty}\dfrac{{\rm Vol}\left(D\cap
B_\rho\right)}{{\rm Vol}(B_\rho)}=0$, then there are two possibilities:
\begin{enumerate}
 \item[2a.] If $D$ is contained between two parallel hyperplanes, then $D$ is an
ellipsoid, or a cylinder over an ellipsoid;
\item[2b.] If $D$ is not contained between two parallel hyperplanes, then
$D=\{x_n>f(x_1,...,x_k),\ 1\leq k<n  \}$ and $f$ is real analytic convex
function.
\end{enumerate}
\end{enumerate}
\end{remark}

It is obvious that we can extend Example \ref{ex:2} to domains of type 2b.
above. We thus proved:

\begin{theorem}
 Let $\Psi(x,t)$ be a solution to the moving boundary problem
(\ref{eq:formulation}). Then the bubble ${D(t)}$ occupies the entire space as
$t$ goes to infinity if and only if the generalized Newtonian potential of the
initial bubble $V[\chi_{D(0)}]$ coincides with a quadratic polynomial in
$D(0)$. 
\end{theorem}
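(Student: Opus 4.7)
The plan is to prove the two directions separately: necessity is immediate from Corollary~\ref{cor:1}, while sufficiency requires a case-by-case construction based on the classification in Remark~\ref{rem} followed by an appeal to Theorem~\ref{thm:2}.

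For necessity, I would invoke Corollary~\ref{cor:1}: if $\lim_{t\to\infty}D(t)=\R^n$ then every third order distributional derivative of any $u\in V[\chi_{D(0)}]$ vanishes in $D(0)$, and together with $\Delta u=-1$ this forces $u|_{D(0)}$ to agree with a quadratic polynomial on each connected component of $D(0)$.

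For sufficiency, assume some $u_0\in V[\chi_{D(0)}]$ equals a quadratic polynomial $q$ on $D(0)$. By Remark~\ref{rem}, $D(0)$ falls into one of three types: (1) a half-space; (2a) an ellipsoid or a cylinder over one; (2b) an epigraph $\{x_n>f(x_1,\ldots,x_k)\}$ with $f$ convex and real analytic. For each type I would exhibit a continuous increasing family $\{D(t)\}_{t\geq 0}$ with $\bigcup_t D(t)=\R^n$ and a selection $u(\cdot,t)\in V[\chi_{D(t)}]$ verifying the compatibility condition (\ref{eq:17}); Theorem~\ref{thm:2} then delivers a solution $\Psi(x,t)=\frac{d}{dt}u(x,t)-a(t)$ of (\ref{eq:formulation}) with $\lim_{t\to\infty}D(t)=\R^n$. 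In case 2a the concentric dilation of Example~\ref{ex:1} applies verbatim; in cases 1 and 2b the family is obtained by translating $D(0)$ (along its normal, or along the $x_n$ axis, respectively), so that (\ref{eq:17}) reduces to finding some $u\in V[\chi_{D(0)}]$ whose restriction to $D(0)$ is independent of the translation variable.

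The main obstacle is producing such a potential in case 2b for arbitrary $1\leq k<n$. I would follow the argument at the end of Example~\ref{ex:2}: the epigraph satisfies the volume density hypothesis (\ref{eq:25}), so by the result from \cite{KM2} there is $w\in V[\chi_{D(0)}]$ obeying the asymptotic identity (\ref{eq:18}). Since $w-q$ vanishes on $D(0)$ and $D(0)$ contains a ray in the positive $x_n$ direction, evaluating the limit on that ray forces the coefficient $a_{nn}$ of $q$ to vanish; a similar scaling along nearby rays lying in $D(0)$ makes the mixed coefficients $a_{in}=a_{ni}$ vanish for $i<n$. The remaining linear term in $x_n$ is killed by subtracting the harmonic polynomial $b_n x_n$, yielding an element of $V[\chi_{D(0)}]$ independent of $x_n$ on $D(0)$. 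Translation along $x_n$ then produces the required monotone family filling $\R^n$, and case 1 is an easier variant of the same construction.
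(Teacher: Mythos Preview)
Your plan is exactly the paper's: necessity from Corollary~\ref{cor:1}, sufficiency by combining the classification in Remark~\ref{rem} with the constructions of Examples~\ref{ex:1} and~\ref{ex:2} and then invoking Theorem~\ref{thm:2}.

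There is, however, a concrete slip in your handling of case~2b. For a general epigraph $\{x_n>f(x_1,\dots,x_k)\}$ --- already for the paraboloid of Example~\ref{ex:2} with $k=n-1$ --- the only rays through the origin that are eventually contained in $D(0)$ are positive multiples of $e_n$, so evaluating the scaling limit ``along nearby rays lying in $D(0)$'' yields only $a_{nn}=0$ and does \emph{not} kill the mixed coefficients $a_{in}$. The paper never claims $a_{in}=0$; instead, once $a_{nn}=0$ is known, each monomial $x_ix_n$ with $i\ne n$ is harmonic, so one subtracts the harmonic polynomial $\sum_i(a_{in}+a_{ni})x_ix_n+b_nx_n$ from $w$, and the resulting element of $V[\chi_{D(0)}]$ is independent of $x_n$ on $D(0)$. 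You already use this subtraction device for the linear term $b_nx_n$, so the repair is immediate. A related caution for case~1: for a half-space no $u\in V[\chi_{D(0)}]$ can be independent of $x_n$ on $D(0)$, since $\Delta u=-1$ there forces an $x_n^2$ contribution that no harmonic polynomial can cancel; one must instead allow a $t$-dependent harmonic correction (for example $u(x,t)=-\tfrac12(x_n+t)_+^2+tx_n+\tfrac12 t^2$, which equals $-\tfrac12 x_n^2$ on $D(t)=\{x_n>-t\}$, so that (\ref{eq:17}) holds trivially).
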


Finally, these techniques can be used also for contractions of flows. For
example, we can construct solution to (\ref{eq:formulation}) so that the
paraboloid $D$ in (\ref{eq:23}) will contract to a half-line. Here we use again
the fact that there is $u\in V[\chi_D]$ that does not depend on $x_n$ for $x\in
D$.

\begin{acknowledgement}
I would like to thanks A. Margulis for many valuable conversations. 
 This project was completed during the author's visit at the Department
of Mathematics at Potsdam Universit\"at and I am grateful to Professor B.-W. 
Schulze for his support and kind hospitality.
\end{acknowledgement}

\bibliographystyle{amsplain}

\end{document}